\newtheorem{theorem}{Theorem}[section]
\newtheorem{lemma}[theorem]{Lemma}
\newtheorem{corollary}[theorem]{Corollary}
\theoremstyle{definition}
\newtheorem{example}[theorem]{Example}
\newtheorem{definition}[theorem]{Definition}
\newtheorem*{remark*}{Remark}
\newtheorem{remark}[theorem]{Remark}
\numberwithin{equation}{section}
\newcommand{\cA}{\mathcal{A}}
\newcommand{\cC}{\mathcal{C}}
\newcommand{\cD}{\mathcal{D}}
\newcommand{\cH}{\mathcal{H}}
\newcommand{\cM}{\mathcal{M}}
\newcommand{\cS}{\mathcal{S}}
\newcommand{\R}{\mathbb{R}}
\newcommand{\N}{\mathbb{N}}
\newcommand{\Z}{\mathbb{Z}}
\newcommand{\M}{\mathbb{M}}
\newcommand{\bF}{{\mathbf F}}
\newcommand{\bL}{{\mathbf L}}
\newcommand{\Span}{\mathrm{span}}
\DeclareMathOperator{\spt}{spt}
\DeclareMathOperator{\area}{area}
\DeclareMathOperator{\width}{width}
\DeclareMathOperator{\ind}{index}
\newcommand{\eps}{\varepsilon}
\title{Localization of min-max minimal hypersurfaces}
\author{Douglas Stryker}
\address{Department of Mathematics, Princeton University, Princeton, NJ 08540, USA}
\email{dstryker@princeton.edu}
\begin{document}

\maketitle

\begin{abstract}
We show that min-max minimal hypersurfaces can be localized. As a consequence, we obtain the sharp generalization to complete manifolds of the famous Almgren-Pitts min-max theorem in closed manifolds. We use this result to prove the existence of a complete embedded finite area minimal hypersurface of index at most one in every balanced complete manifold.
\end{abstract}

\section{Introduction}
The existence of an abundance of closed minimal hypersurfaces in any closed Riemannian manifold has been established as the culmination of a Morse theory framework for the area functional, called \emph{min-max} (we refer the reader to \cite{MN_morse, MN_morse_mult, MN_pos_ricci, LMN_weyl, IMN_dense, MNS_equi, zhou_mult, song_inf}). This framework additionally provides bounds on the area and the Morse index of the produced minimal hypersurfaces. However, it is difficult to deduce any additional information about these hypersurfaces.

One interesting feature of min-max minimal hypersurfaces that has received substantial attention is their topology. For surfaces in 3-manifolds, a modified Morse theory framework was developed in \cite{smith} (see also \cite{CDL, DLP, ketover_genus}), which yields an upper bound on the genus of the resulting minimal surface. We refer the reader to \cite{Ketover, 4spheres} for exciting recent developments in this direction. Unfortunately, this modified framework does not work in higher dimensions, due to limitations of the required regularity theory.

In this paper, we focus on a different and somewhat overlooked feature of min-max minimal hypersurfaces: their location in the ambient space. While potentially useful in a variety of problems, we emphasize that localization of min-max minimal hypersurfaces is \emph{essential} to prove the existence of minimal hypersurfaces in \emph{non-compact} ambient spaces. Indeed, min-max sequences may run off to infinity in general without some form of localization.

\subsection{Previous work}
There have been three particularly relevant results towards localization of min-max minimal hypersurfaces in general ambient spaces\footnote{\cite{song_inf} also studies the localization of minimal hypersurfaces in a stronger sense (i.e. constraining the hypersurface the lie fully within a given set) in manifolds with cylindrical ends. However, this result crucially uses the rigid structure of these manifolds.}.

\begin{itemize}
	\item \cite{montezuma}: If $(M, g)$ admits a bounded open subset $U$ with strictly mean concave boundary (and $(M, g)$ satisfies some additional hypotheses about its asymptotic geometry), then \cite{montezuma} uses min-max to produce a closed embedded minimal hypersurface intersecting $U$.
	\item \cite{CL}: If $(M, g)$ admits a bounded open subset $U$ with the property that a certain notion of min-max width (for 1-parameter sweepouts) is at least ten times the area of the boundary of $U$, then \cite{CL} uses min-max to produce an embedded complete minimal hypersurface of finite area intersecting any open neighborhood of $\overline{U}$. Notably, this result suffices to prove the existence of a complete minimal hypersurface of finite area in any complete manifold of finite volume, generalizing to higher dimensions the analogous result for geodesics due to \cite{thorbergsson, bangert}.
	\item \cite{song_dich}: If $(M, g)$ admits a compact domain $K$ that has no singular strictly mean-convex foliation, then \cite{song_dich} uses min-max and the mean curvature flow to produce an embedded complete minimal hypersurface of finite area and index at most one intersecting $K$\footnote{See also \cite{gromov} for an earlier proof of a similar result.}. This result suffices to prove the existence of a complete minimal hypersurface of finite area and \emph{index at most one} in any complete manifold of finite volume, improving on the result of \cite{CL}.
\end{itemize}

From the perspective of general existence problems, criteria related to the nontriviality of some min-max invariant seems to be the most natural and useful. One reason is that criteria related to the curvature of certain submanifolds can be hard to verify. Moreover, the \emph{triviality} of certain min-max invariants can be a useful hypothesis in proving existence by other means (i.e.\ Lyusternik-Schnirelmann theory, see \cite{LS}), enabling proofs of existence under weaker hypotheses by a dichotomy argument (for example, see the dichotomy set up in our proof of Theorem \ref{thm:balanced}).

In the ``min-max invariant'' framework, several questions remain regarding the localization of min-max minimal hypersurfaces (including several questions posed in \cite{CL}).
\begin{itemize}
	\item \emph{Does the same result hold if the min-max width is only assumed to be greater than the area of the boundary of $U$?} (\cite[\S2.5 Question 1]{CL}.)
	\item \emph{Does the same result hold for a minimal hypersurface intersecting $\overline{U}$?} (\cite[\S2.5 Question 3]{CL}.)
	\item \emph{Does the same result hold for a minimal hypersurface with index at most 1?} (\cite[\S 2.5 Question 2]{CL}.)
	\item \emph{Does an analogous result hold for min-max using higher parameter sweepouts?}
\end{itemize}

We emphasize the none of the previous work in this area deals with higher parameter min-max. While the modest applications we consider in this paper only use 1-parameter min-max constructions, we expect that localization for higher parameter sweepouts will be useful to prove the existence of more than one minimal hypersurface in some non-compact settings.

\subsection{Min-max localization}
The following result is the sharp analogue in complete manifolds of the Almgren-Pitts min-max theorem (see \cite{pitts}). Moreover, it answers all of the previous questions affirmatively\footnote{We note that we use a different formulation of min-max from \cite{CL}. For a comparison of the min-max frameworks considered in \cite{CL} versus here, see \S\ref{sec:comp}. We emphasize that our proof will work in any min-max framework for which the multiplicity one result of \cite{zhou_mult} applies--we only use this formulation for simplicity, as it is precisely the formulation used in \cite{zhou_mult}.}. We refer the reader to \S\ref{sec:setup} for the precise min-max definitions.

\begin{theorem}\label{thm:main-loc}
Let $(M^{n+1}, g)$ be complete of dimension $3 \leq n+1 \leq 7$. Let $X^k \subset I^m$ be a $k$-dimensional cubical complex, and let $Z \subset X$ be a cubical subcomplex. Let $\Phi_0 : X \to \cC(M)$ be an $\bF$-continuous map. If
\[ \bL(\Phi_0, M) > \sup_{z \in Z} \cH^n(\partial \Phi_0(z)), \]
then there exists a smooth complete (possibly non-compact) embedded minimal hypersurface $\Sigma \subset M$ satisfying
\begin{itemize}
	\item $\cH^n(\Sigma) \leq \bL(\Phi_0, \Span(\Phi_0))$,
	\item $\ind(\Sigma) \leq k$,
	\item $\Sigma \cap \Span(\Phi_0) \neq \varnothing$, where $\Span(\Phi_0) := \overline{\bigcup_{x \in X} \partial \Phi_0(x)}$.
\end{itemize}
\end{theorem}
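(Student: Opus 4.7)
The plan is to reduce to the compact setting via an exhaustion of $M$, apply Zhou's multiplicity-one min-max theorem on each compact piece, establish a localization step forcing the resulting minimal hypersurfaces to meet $\Span(\Phi_0)$, and pass to the limit.

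First I would fix an exhaustion $U_1 \Subset U_2 \Subset \cdots \Subset M$ by relatively compact open sets with smooth mean concave boundary (obtainable by perturbing level sets of a smoothed distance function to a basepoint), each containing $\Span(\Phi_0)$ in its interior. In each $\overline{U_j}$ the mean concave boundary provides a barrier against minimal hypersurfaces. Restricting $\Phi_0$ to a sweepout $\Phi_0^j$ valued in $\cC(\overline{U_j})$ and admitting only $\bF$-continuous homotopies supported in $\Span(\Phi_0)$, the natural compact min-max width equals $\bL(\Phi_0, \Span(\Phi_0)) \geq \bL(\Phi_0, M) > \sup_{z \in Z} \cH^n(\partial \Phi_0(z))$, so the strict inequality needed to invoke \cite{zhou_mult} is inherited. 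Applying that theorem produces a smooth embedded minimal hypersurface $\Sigma_j \subset \overline{U_j}$ of multiplicity one with $\cH^n(\Sigma_j) \leq \bL(\Phi_0, \Span(\Phi_0))$ and $\ind(\Sigma_j) \leq k$.

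The heart of the proof is the localization step: showing that $\Sigma_j \cap \Span(\Phi_0) \neq \varnothing$. I would argue by contradiction. If $\Sigma_j$ lay in the complement of a tubular neighborhood of $\Span(\Phi_0)$, the aim would be to construct an admissible deformation of $\Phi_0^j$ supported near $\Sigma_j$ (and disjoint from $\Span(\Phi_0)$, so the boundary slices are unchanged) that strictly decreases the maximum mass below the min-max width, contradicting the min-max characterization of $\Sigma_j$. A natural approach is to use a mean-convex foliation of a neighborhood of $\Sigma_j$ (available since $\Sigma_j$ is two-sided, embedded, and of multiplicity one) to flow each slice inward, strictly reducing its mass in that region, combined with an interpolation argument in the $\bF$-topology to maintain continuity in the parameter $x \in X$. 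I expect this to be the main obstacle: for $k \geq 2$ no monotone scalar deformation suffices, and one must arrange the deformations continuously in the parameter, likely via a Pitts-style combinatorial discretization of the parameter space $X$ adapted to its cubical structure, analogous to the interpolation procedures used in \cite{MN_morse}.

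Finally, with $\Sigma_j \cap \Span(\Phi_0) \neq \varnothing$ and uniform area and index bounds on compact subsets, I would extract a smooth subsequential limit $\Sigma$ using standard compactness theory for minimal hypersurfaces of bounded index and area together with the Schoen--Simon curvature estimates. A diagonal argument across the exhaustion yields a complete embedded minimal hypersurface with the required area and index bounds; multiplicity one on compact subsets follows from the index bound together with the strict inequality, which rules out collapse to a stable limit. Since $\Span(\Phi_0)$ is a compact set that each $\Sigma_j$ meets, the limit $\Sigma$ meets $\Span(\Phi_0)$ as well, completing the construction.
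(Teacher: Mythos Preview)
Your localization step has a genuine gap. You propose to deform the sweepout near $\Sigma_j$ using a mean-convex foliation to push the maximum mass strictly below the width, but this does not yield a contradiction: even if you decrease mass near $\Sigma_j$, the width may still be realized by a \emph{different} critical varifold, possibly also disjoint from $\Span(\Phi_0)$. Nothing in your sketch rules out an infinite sequence of such critical points, and the ``min-max characterization of $\Sigma_j$'' does not assert uniqueness. A related error: a strictly mean-convex foliation on both sides of $\Sigma_j$ is not a consequence of being two-sided, embedded, and multiplicity one---it requires $\Sigma_j$ to be \emph{unstable}. That in turn needs Zhou's multiplicity-one theorem together with the fact (from \cite{local_minmax}, see also \cite{MN_morse_mult}) that a multiplicity-one min-max hypersurface cannot be stable; you never invoke this. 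Finally, an exhaustion with strictly mean-convex boundary cannot in general be obtained by perturbing distance spheres; the paper instead modifies the \emph{metric} near $\partial M_i$ (keeping $\tilde g_i = g$ near $\Span(\Phi_0)$ and $\tilde g_i \geq g$ everywhere) to force mean convexity, then checks the widths behave correctly.

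The paper's localization mechanism is different in kind. After passing to a weakly bumpy metric on a compact piece with strictly mean-convex boundary, Sharp's compactness gives that there are only \emph{finitely many} connected two-sided closed minimal hypersurfaces with area $\leq \bL(\Phi_0,\Span(\Phi_0))$ and $1\leq \ind \leq k$. Assuming none of them meet $\Span(\Phi_0)$, one takes a maximal pairwise-disjoint subcollection $\cD$, excises a small tubular neighborhood of each element (instability furnishes the mean-convex foliation), and obtains a new domain $W\supset \Span(\Phi_0)$ with strictly mean-convex boundary. Running min-max again in $W$ produces a new unstable hypersurface in the same finite list but disjoint from every element of $\cD$, contradicting maximality. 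Bumpiness is removed at the end by approximation and Sharp's compactness for varying metrics. The key idea you are missing is this iterate-and-terminate argument driven by finiteness, not a single sweepout deformation.
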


The localization aspect of this result (i.e.\ nonempty intersection with the span) is new even for closed ambient spaces, and we expect that there may be interesting applications in that setting alone. Moreover, we prove the analogous statement in compact manifolds with strictly mean convex boundary in the course of the proof of Theorem \ref{thm:main-loc}.

\subsection{Balanced manifolds}
As a modest application of this theory, we find that Theorem \ref{thm:main-loc} applies to a large class of complete manifolds defined only by their asymptotic behavior. Roughly, we say that a complete manifold is \emph{balanced} if we can decompose $M = E \sqcup K \sqcup F$ for $K$ compact so that the asymptotic minimal area of cross sections of $E$ and $F$ agree; namely
\begin{align*}
\lim_{r \to \infty} & \inf\{\cH^n(\Sigma) \mid \Sigma \text{\ is\ homologous\ to\ } \partial E\ \text{in\ }E,\ \Sigma \cap B_r(K) = \varnothing\}\\
& = \lim_{r \to \infty} \inf\{\cH^n(\Sigma) \mid \Sigma \text{\ is\ homologous\ to\ } \partial F\ \text{in\ }F,\ \Sigma \cap B_r(K) = \varnothing\}.
\end{align*}
We refer to \S\ref{sec:balanced} for the precise definition.

\begin{theorem}\label{thm:main-balanced}
Every complete balanced $(M^{n+1}, g)$ of dimension $3 \leq n+1 \leq 7$ admits a smooth complete embedded minimal hypersurface of finite area and index at most 1.
\end{theorem}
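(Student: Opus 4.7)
The plan is a dichotomy built on Theorem \ref{thm:main-loc}, applied to a 1-parameter sweepout between nearly-minimizing cross sections in the two ends. Write $M = E \sqcup K \sqcup F$ with $K$ compact and let $a \geq 0$ denote the common asymptotic cross-section area appearing in the balanced condition. For each large $R$, I would let $\Sigma^E_R \subset E$ and $\Sigma^F_R \subset F$ be hypersurfaces minimizing $\cH^n$ in their homology class (homologous in the respective end to $\partial E$ and $\partial F$) subject to being disjoint from $B_R(K)$. By the balanced hypothesis, $\cH^n(\Sigma^E_R)$ and $\cH^n(\Sigma^F_R)$ both tend to $a$ as $R \to \infty$, and standard regularity theory gives that each is a smooth embedded stable minimal hypersurface. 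Let $\Omega_R$ denote the compact region of $M$ bounded by $\Sigma^E_R \cup \Sigma^F_R$.

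Next I would construct an $\bF$-continuous 1-parameter sweepout $\Phi_R : I \to \cC(M)$ with $\Phi_R(0)$ equal to the component of $M \setminus \Sigma^E_R$ containing the end of $E$, with $\Phi_R(1) = \Phi_R(0) \cup \Omega_R$, and sweeping monotonically through $\Omega_R$. The boundary slices at $z \in Z := \partial I = \{0, 1\}$ are $\Sigma^E_R$ and $\Sigma^F_R$, whose areas tend to $a$, and $\Span(\Phi_R) \subset \overline{\Omega_R}$ is compact. A topological separating argument gives the lower bound $\bL(\Phi_R, M) \geq a - o_R(1)$: for any competitor, at some intermediate time the boundary slice must contain a hypersurface homologous in $F$ (resp.\ $E$) to $\partial F$ outside $B_R(K)$, whose area is bounded below by the minimum in that class, which tends to $a$.

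The central dichotomy is now the following. If for some $R$ the strict inequality $\bL(\Phi_R, M) > \max_{z \in Z} \cH^n(\partial \Phi_R(z))$ holds, then Theorem \ref{thm:main-loc} applies and produces a smooth complete embedded minimal hypersurface $\Sigma$ with $\cH^n(\Sigma) \leq \bL(\Phi_R, \Span(\Phi_R)) < \infty$, $\ind(\Sigma) \leq 1$, and $\Sigma \cap \Span(\Phi_R) \neq \varnothing$; compactness of the span then gives the desired finite-area hypersurface. If instead $\bL(\Phi_R, M) = \max_z \cH^n(\partial \Phi_R(z)) \to a$ for all large $R$, then $\{\Sigma^E_R\}$ and $\{\Sigma^F_R\}$ are themselves sequences of stable minimal hypersurfaces with areas tending to $a$, and if either subsequence remains in a fixed compact subset of $M$, Schoen-Simon compactness yields a complete embedded stable minimal hypersurface of finite area $\leq a$ and index $0 \leq 1$.

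The main obstacle I anticipate is the degenerate subcase in which both $\{\Sigma^E_R\}$ and $\{\Sigma^F_R\}$ escape to infinity, since then there is no min-max gap to invoke Theorem \ref{thm:main-loc} and the endpoints themselves do not converge to anything inside $M$. I would expect to handle this by either (i) a more refined sweepout that uses the balanced condition to force a nontrivial bulge in a bounded region of $M$ (for instance by combining pieces of $\Sigma^E_R$ and $\Sigma^F_R$ with connecting tubes across $K$, then applying the dichotomy to this modified family with strictly larger endpoint areas), or (ii) approximating $M$ by a compact exhaustion with strictly mean-convex boundary and invoking the compact boundary version of Theorem \ref{thm:main-loc} noted in the paper, then passing to the limit as the exhaustion swells to $M$.
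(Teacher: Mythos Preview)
Your overall dichotomy is on the right track, but there are two genuine gaps, and the paper's proof is organized as a \emph{trichotomy} rather than a dichotomy.

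First, your lower bound $\bL(\Phi_R, M) \geq a - o_R(1)$ is not correct. Competitors in the homotopy class $\Pi(\Phi_R, M)$ are only required to agree with $\Phi_R$ at the endpoints (in the $\bF$-limit); intermediate slices may lie anywhere in $M$, not only outside $B_R(K)$. The honest lower bound is $\bL(\Phi_R, M) \geq \area(\alpha) := \inf_{T \in \alpha} \M(T)$, and this can be strictly less than $a$. The paper treats the case $\area(\alpha) < a$ separately (Lemma~\ref{lem:noncompact_min}) by direct minimization: one minimizes in $\alpha$ over a compact exhaustion, and the minimizers must intersect a fixed compact set precisely because any cross-section supported in $E \setminus B_R$ or $F \setminus B_R$ eventually has mass close to $a > \area(\alpha)$. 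So this case produces a \emph{stable} hypersurface without any min-max.

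Second, and more importantly, your acknowledged ``main obstacle'' is exactly the residual case $\area(\alpha) = \width(\alpha) = a$, and neither of your proposed fixes works: attaching tubes across $K$ raises the endpoint areas without raising the width, and the compact-exhaustion idea is already what underlies Theorem~\ref{thm:main-loc} and gives nothing when there is no gap. The paper resolves this case by a Lyusternik--Schnirelmann argument (Lemma~\ref{lem:noncompact_rigidity}). Fix any compact $D \subset M$ with smooth boundary. By the relative isoperimetric inequality there is $c > 0$ such that every sweepout contains a slice $\partial \Phi(t)$ with $\cH^{n+1}(\Phi(t)\cap D) = \tfrac12 \cH^{n+1}(D)$ and hence $\cH^n(\partial\Phi(t)\cap D) \geq c$. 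When $\width(\alpha) = \area(\alpha)$, taking almost-optimal sweepouts produces such slices that are simultaneously an area-minimizing sequence in $\alpha$; Schoen--Simon compactness then yields a stable minimal hypersurface of area $\leq a$ that meets $D$. Shrinking $D$ to a point gives one through any prescribed $x \in M$. This is the missing idea in your proposal: rather than trying to trap the \emph{endpoints}, one traps an \emph{intermediate} slice using only the isoperimetric inequality and the collapse of the min-max gap.
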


In \S\ref{sec:balanced}, we present an example to show the necessity of the balanced condition.

We emphasize that Theorem \ref{thm:main-balanced} applies to manifolds exhausted by compact subsets with smooth boundaries whose area tends to zero, which includes finite volume manifolds. Hence, we provide a simpler proof of \cite[Corollary 1.3]{CL} and \cite[Corollary 2.2 (1)]{song_dich}:

\begin{corollary}
Every complete finite volume $(M^{n+1}, g)$ of dimension $3 \leq n+1 \leq 7$ admits a smooth complete embedded minimal hypersurface of finite area and index at most 1.
\end{corollary}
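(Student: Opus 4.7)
The plan is to verify that every complete finite volume manifold is balanced in the sense of \S\ref{sec:balanced} and then invoke Theorem~\ref{thm:main-balanced} directly. The essential content is a standard coarea estimate showing that finite volume forces the ends to admit cross-sections of arbitrarily small area, which in turn forces the asymptotic minimal area of cross sections of every end to vanish.

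First I would construct a smooth proper exhaustion function $\rho : M \to [0, \infty)$ with $|\nabla \rho| \leq 1$, obtained by mollifying the distance from a chosen basepoint. The coarea formula yields
\[ \int_0^\infty \cH^n(\rho^{-1}(t))\, dt \leq \mathrm{Vol}(M) < \infty, \]
so by Sard's theorem there is a sequence of regular values $t_k \to \infty$ with $\cH^n(\rho^{-1}(t_k)) \to 0$, and each level set $\rho^{-1}(t_k)$ is a smooth compact hypersurface.

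Next I would fix a regular value $t_0$ of $\rho$ and set $K := \rho^{-1}([0, t_0])$, which is compact since $\rho$ is proper. I would then partition the (at most countably many) connected components of $M \setminus K$ into two families to obtain open sets $E, F$ with $M = E \sqcup K \sqcup F$ and $\partial E, \partial F \subset \rho^{-1}(t_0)$; if $M$ is one-ended, one of $E, F$ may be taken empty. For any $r > 0$ and any $k$ with $t_k > t_0 + r$, the cross-section $\Sigma_k^E := \rho^{-1}(t_k) \cap E$ is disjoint from $B_r(K)$ and, together with $\partial E$, bounds the compact region $\rho^{-1}([t_0, t_k]) \cap E$ inside $E$; hence $\Sigma_k^E$ is homologous to $\partial E$ in $E$. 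Since $\cH^n(\Sigma_k^E) \leq \cH^n(\rho^{-1}(t_k)) \to 0$, the asymptotic minimal area of cross sections of $E$ vanishes, and the same reasoning applies to $F$. Thus both asymptotic areas equal $0$, $M$ is balanced, and Theorem~\ref{thm:main-balanced} delivers the desired smooth complete embedded minimal hypersurface of finite area and index at most $1$.

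There is no genuine obstacle here beyond this coarea bookkeeping; the corollary is immediate once Theorem~\ref{thm:main-balanced} is in hand, and the closed case is already subsumed by the classical Almgren--Pitts 1-parameter min-max theorem.
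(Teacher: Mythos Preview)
Your proposal is correct and follows essentially the same route as the paper: verify that a finite volume manifold is balanced and then invoke Theorem~\ref{thm:main-balanced}. The only cosmetic difference is in the choice of decomposition---the paper takes $E$ to be a small geodesic ball (so $\cA_E=0$ is automatic because $E$ is bounded) and $K$ an annulus, whereas you take $K$ to be a sublevel set of a smoothed distance function and split the ends into $E$ and $F$; in both cases the vanishing of the remaining asymptotic area(s) comes from the same coarea argument you spell out.
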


In most cases (see Remark \ref{rem:comp}), Theorem \ref{thm:main-balanced} is completely new, as it requires the sharpness of our localization result.

\subsection{Sketch of the proof of Theorem \ref{thm:main-loc}}
As compared to the careful cut-and-paste constructions of \cite{CL}, our proof is extremely geometrically simple. The ideas here somewhat resemble the ``cutting along hypersurfaces'' arguments in \cite{song_dich}, but our proof involves many fewer complications and tools. For example, we do not require the use of the level set flow.

We first reduce to the case of compact manifolds with smooth, compact, strictly mean convex boundary by modifying the metric near the boundary of a compact exhaustion of $M$ and using the compactness theory of \cite{sharp}.

We further reduce to the case where the metric is bumpy, again using the compactness theory of \cite{sharp}.

By the resolution of the multiplicity one conjecture due to \cite{zhou_mult} in bumpy metrics and the quantitative minimality of strictly stable minimal hypersurfaces due to \cite{local_minmax} (see also \cite{MN_morse_mult}), min-max will produce a connected \emph{unstable} 2-sided smooth closed embedded minimal hypersurface $\Sigma$ with bounded area and bounded index. Suppose that $\Sigma$ does not intersect $\Span(\Phi_0)$. Since each side of a small tubular neighborhood of a connected 2-sided unstable minimal hypersurface has a strictly mean convex foliation, there is a subdomain $W \subset M$ disjoint from a component of $\Sigma$ and containing $\Span(\Phi_0)$ that has strictly mean convex boundary, so we can now restart the argument in $W$. Since there are only finitely many smooth closed embedded minimal hypersurfaces satisfying the area and index bound (by bumpiness), we produce a hypersurface that intersects $\Span(\Phi_0)$ after only finitely many iterations of this argument. We refer the reader to Figure \ref{fig:main} for an illustration of the argument.

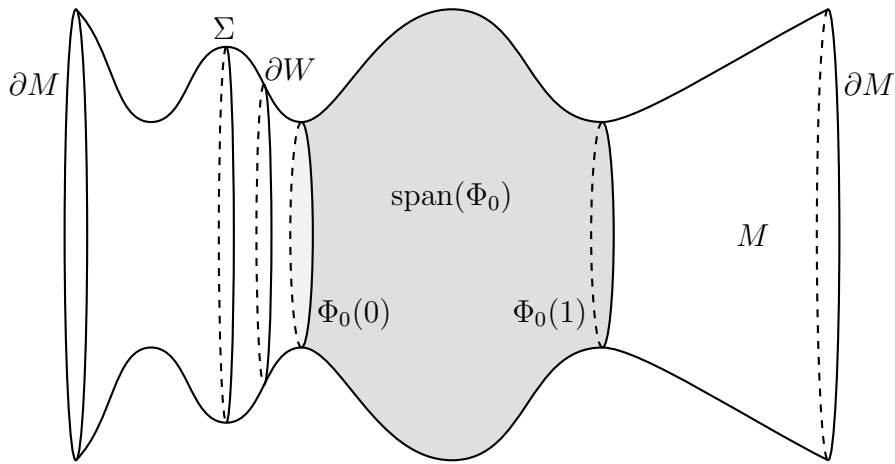
\begin{figure}
\centering
\begin{tikzpicture}

	\fill[gray!25!white] (-2, 1.5) arc [start angle=90, end angle=-90, x radius=.15, y radius=1.5]
		.. controls (-1.5, -1.5) and (-1, -3) .. (0, -3)
		.. controls (1, -3) and (1, -1.5) .. (2, -1.5)
		arc [start angle=-90, end angle=90, x radius=.15, y radius=1.5]
		.. controls (1, 1.5) and (1, 3) .. (0, 3)
		.. controls (-1, 3) and (-1.5, 1.5) .. (-2, 1.5);
	
	\fill[gray!10!white] (-2, 0) ellipse (.15 and 1.5);
	
	\draw[thick] (-5, 3) .. controls (-4.5, 2.5) and (-4.5, 1.5) .. (-4, 1.5)
		.. controls (-3.5, 1.5) and (-3.5, 2.5) .. (-3, 2.5)
		.. controls (-2.5, 2.5) and (-2.5, 1.5) .. (-2, 1.5)
		.. controls (-1.5, 1.5) and (-1, 3) .. (0, 3)
		.. controls (1, 3) and (1, 1.5) .. (2, 1.5)
		.. controls (2.5, 1.5) and (4, 2.5) .. (5, 3);
	
	\draw[thick] (-5, -3) .. controls (-4.5, -2.5) and (-4.5, -1.5) .. (-4, -1.5)
		.. controls (-3.5, -1.5) and (-3.5, -2.5) .. (-3, -2.5)
		.. controls (-2.5, -2.5) and (-2.5, -1.5) .. (-2, -1.5)
		.. controls (-1.5, -1.5) and (-1, -3) .. (0, -3)
		.. controls (1, -3) and (1, -1.5) .. (2, -1.5)
		.. controls (2.5, -1.5) and (4, -2.5) .. (5, -3);
		
	\draw[thick] (-5, 0) ellipse (.15 and 3);
	
	\draw[thick] (5, 3) arc [start angle=90, end angle=-90, x radius=.15, y radius=3];
	\draw[thick, dashed] (5, 3) arc [start angle=90, end angle=270, x radius=.15, y radius=3];
	
	\draw[thick] (-3, 2.5) arc [start angle=90, end angle=-90, x radius=.1, y radius=2.5];
	\draw[thick, dashed] (-3, 2.5) arc [start angle=90, end angle=270, x radius=.1, y radius=2.5];
	
	\draw[thick] (-2.5, 2) arc [start angle=90, end angle=-90, x radius=.1, y radius=2];
	\draw[thick, dashed] (-2.5, 2) arc [start angle=90, end angle=270, x radius=.1, y radius=2];
	
	\draw[thick] (-2, 1.5) arc [start angle=90, end angle=-90, x radius=.15, y radius=1.5];
	\draw[thick, dashed] (-2, 1.5) arc [start angle=90, end angle=270, x radius=.15, y radius=1.5];
	
	\draw[thick] (2, 1.5) arc [start angle=90, end angle=-90, x radius=.15, y radius=1.5];
	\draw[thick, dashed] (2, 1.5) arc [start angle=90, end angle=270, x radius=.15, y radius=1.5];

	\draw (0, .5) node {$\Span(\Phi_0)$};
	
	\draw (4, 0) node {$M$};
	
	\draw (-1.3, -1.05) node {$\Phi_0(0)$};
	
	\draw (1.3, -1.05) node {$\Phi_0(1)$};
	
	\draw (-3.03, 2.75) node {$\Sigma$};
	
	\draw (-2.15, 2.2) node {$\partial W$};
	
	\draw (-5.55, 2) node {$\partial M$};
	
	\draw (5.55, 2) node {$\partial M$};

\end{tikzpicture}
\caption{An illustration of the proof of Theorem \ref{thm:main-loc}.}
\label{fig:main}
\end{figure}

\acknowledgements I am grateful to my advisor Fernando Cod{\'a} Marques for his support and for suggesting the topic of min-max in non-compact spaces. I am also grateful to Lorenzo Sarnataro and Otis Chodosh for providing feedback on earlier versions of this paper.

I was supported by an NDSEG Fellowship.

\section{Setup and Notation}\label{sec:setup}
Let $(M^{n+1}, g)$ be a smooth Riemannian manifold, either closed, complete non-compact, or compact with smooth compact boundary.

\begin{definition}[$\cC(M)$]
We define the appropriate class of finite perimeter sets in each case:
\begin{itemize}
\item If $(M, g)$ is closed, we define $\cC(M)$ to be the set of finite perimeter sets $\Omega \subset M$.
\item If $(M, g)$ is complete non-compact, we define $\cC(M)$ to be the set of finite perimeter sets $\Omega \subset M$ so that $\partial \Omega$ is bounded and nonempty. We emphasize that $\Omega \in \cC(M)$ need not be bounded; we only require that $\partial \Omega$ is bounded.
\item If $(M, g)$ is compact with smooth compact boundary, we define $\cC(M)$ to be the set of finite perimeter sets $\Omega \subset M$ so that $\partial \Omega \cap \mathrm{Int}(M) \neq \varnothing$ and $\overline{\partial \Omega \cap \mathrm{Int}(M)} \cap \partial M = \varnothing$. In other words, $\partial \Omega \cap \mathrm{Int}(M)$ is nonempty and contained in a compact subset of $\mathrm{Int}(M)$.
\end{itemize}
We note that in any case, if $W \subset M$ is a bounded open domain with smooth boundary, $\Omega \in \cC(M)$, and $\partial \Omega \subset W$, then $\Omega \cap W \in \cC(W)$.
\end{definition}

Henceforth, we follow the setup and notation of \cite{zhou_mult}.

\begin{definition}[$\bF$-metric on $\cC(M)$]
For $\Omega \in \cC(M)$, we let $[\Omega]$ denote the integral $(n+1)$-current associated to $\Omega$, and we let $|\partial \Omega|$ denote the varifold associated to $\partial \Omega$. For $\Omega_1,\ \Omega_2 \in \cC(M)$, we define
\[ \bF(\Omega_1, \Omega_2) := \mathcal{F}([\Omega_1] - [\Omega_2]) + \bF(|\partial \Omega_1|, |\partial \Omega_2|), \]
where $\mathcal{F}$ is the flat norm on currents and $\bF$ is the usual $\bF$-metric on varifolds.
\end{definition}

Let $X^k \subset I^m$ be a $k$-dimensional cubical complex, and let $Z \subset X$ be a cubical subcomplex. Let $\Phi_0 : X \to \cC(M)$ be a continuous map in the $\bF$ topology.

\begin{definition}[Span]
The \emph{span} of $\Phi_0$ is the set
\[ \Span(\Phi_0) := \overline{\bigcup_{x\in X} \partial \Phi_0(x)}. \]
By definition, $\Span(\Phi_0) \subset \mathrm{Int}(M)$ is compact.
\end{definition}

\begin{definition}[Homotopy class]
Let $\Pi(\Phi_0, M)$ be the set of all sequences of $\bF$-continuous maps $\{\Phi_i : X \to \cC(M)\}_{i\in \N}$ with the property that there are flat-continuous homotopies $\{\Psi_i : [0, 1] \times X \to \cC(M)\}_{i \in \N}$ satisfying $\Psi_i(0, \cdot) = \Phi_0$, $\Psi_i(1, \cdot) = \Phi_i$, and
\[ \limsup_{i\to \infty}\sup\{\bF(\Psi_i(s, z), \Phi_0(z)) \mid s \in [0, 1],\ z\in Z\} = 0. \]
$\Pi(\Phi_0, M)$ is called the \emph{homotopy class of $\Phi_0$ in $M$}.
\end{definition}

\begin{definition}[Homotopy class on subdomain]
If $W \subset M$ is a bounded open domain with smooth boundary and $\Span(\Phi_0) \subset W$, then we define
\[ \Pi(\Phi_0, W) := \Pi(\Phi_0 \cap W, W), \]
where $(\Phi_0 \cap W)(x) := \Phi_0(x) \cap W$. $\Pi(\Phi_0, W)$ is called the \emph{homotopy class of $\Phi_0$ in $W$}.
\end{definition}

\begin{definition}[Width]
The \emph{width} of $\Pi(\Phi_0, W)$ is
\[ \bL(\Phi_0, W) := \inf_{\{\Phi_i\} \in \Pi(\Phi_0, W)} \limsup_{i \to \infty} \sup_{x \in X} \cH^n(\partial \Phi_i(x)). \]
\end{definition}

\begin{remark}
We emphasize the ambient space in the definition because we must consider restrictions to subdomains. It follows from a straightforward extension argument that if $\Span(\Phi_0) \subset W_1 \subset W_2 \subset M$ are bounded open domains with smooth boundary, then
\[ \bL(\Phi_0, W_1) \geq \bL(\Phi_0, W_2). \]
\end{remark}

\begin{definition}[Width on subset]
If $\Span(\Phi_0) \subset C \subset M$ is an arbitrary subset, we define
\[ \bL(\Phi_0, C) := \sup\{\bL(\Phi_0, W) \mid W\supset C \text{\ bounded open domain with smooth boundary}\}, \]
and the supremum is finite because $\bL(\Phi_0, W) \leq \sup_{x \in X} \cH^n(\partial \Phi_0(x)) < \infty$ for any bounded open domain with smooth boundary $W$ containing $\Span(\Phi_0)$.

In particular, this definition allows us to make sense of $\bL(\Phi_0, \Span(\Phi_0))$.
\end{definition}

\section{Localization in Compact Manifolds}
Suppose $(M^{n+1}, g)$ is a smooth compact Riemannian manifold of dimension $3 \leq n+1 \leq 7$ with smooth compact (possibly empty) boundary.

It will be convenient to approximate arbitrary metrics by \emph{bumpy} metrics.

\begin{definition}
The metric $g$ is \emph{weakly bumpy} if every smooth closed embedded minimal hypersurface in $M$ is non-degenerate.
\end{definition}

\begin{lemma}\label{lem:generic}
The set of weakly bumpy metrics on $M$ is $C^{\infty}$ dense.
\end{lemma}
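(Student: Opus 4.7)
The lemma is a version of Brian White's bumpy metrics theorem adapted to manifolds with boundary, and my plan is to reduce to White's original theorem on a closed ambient manifold. Given a smooth metric $g$ on $M$ to be approximated, I would first construct a closed $(n{+}1)$-manifold $\tilde M$ containing $M$ as a codimension-zero submanifold with boundary (for instance by doubling $M$ across $\partial M$ after rounding a collar, or by gluing on a diffeomorphic cap), and extend $g$ to a smooth Riemannian metric $\tilde g$ on $\tilde M$ via a partition of unity.

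Next, I would apply White's bumpy metrics theorem on $\tilde M$: the set of metrics on $\tilde M$ for which every smooth closed embedded minimal hypersurface is non-degenerate is $C^\infty$-dense (indeed residual) in the space of all smooth metrics on $\tilde M$. This produces a sequence $\tilde g_k \to \tilde g$ in $C^\infty(\tilde M)$ with each $\tilde g_k$ bumpy on $\tilde M$. Restricting to $M$, the sequence $g_k := \tilde g_k|_M$ converges to $g$ in $C^\infty(M)$.

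Finally, I would verify that each $g_k$ is weakly bumpy on $M$. Let $\Sigma \subset M$ be any smooth closed embedded minimal hypersurface in $(M, g_k)$. Viewed as a subset of $\tilde M$, $\Sigma$ remains smooth, closed, and embedded, and since $\tilde g_k$ coincides with $g_k$ on a neighborhood of $\Sigma$ in $\tilde M$, $\Sigma$ also satisfies the minimal surface equation in $(\tilde M, \tilde g_k)$. Bumpiness of $\tilde g_k$ then forces $\Sigma$ to be non-degenerate as a minimal hypersurface in $(\tilde M, \tilde g_k)$. Because the Jacobi operator on $\Sigma$ is determined by the ambient metric in an arbitrarily small neighborhood of $\Sigma$, non-degeneracy in $(\tilde M, \tilde g_k)$ is equivalent to non-degeneracy in $(M, g_k)$, so $g_k$ is weakly bumpy.

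The main technical step is the construction of the closed extension $(\tilde M, \tilde g)$: it must be genuinely smooth and contain $(M, g)$ isometrically as a codimension-zero submanifold with boundary. This is standard but requires some care near $\partial M$; once it is in hand the rest of the argument is just a bookkeeping application of White's theorem, together with the locality of the Jacobi operator.
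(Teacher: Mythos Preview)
Your proposal is correct and follows essentially the same route as the paper: extend $(M,g)$ to a closed Riemannian manifold, apply White's bumpy metrics theorem there, and restrict back, using that minimality and the Jacobi operator of a closed embedded hypersurface in $M$ are determined by the metric on the codimension-zero subset $M$. The paper states this in two sentences, while you spell out the verification step, but the argument is the same.
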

\begin{proof}
Let $g$ be any metric on $M$. By a standard extension argument, there is a smooth closed Riemannian manifold $(N, \overline{g})$ that has $(M, g)$ as a subset. By White's bumpy metric theorem \cite[Theorem 2.2]{white_bumpy}, there is a sequence of bumpy metrics on $N$ converging smoothly to $\overline{g}$. The restrictions of these metrics to $M$ are weakly bumpy by definition, and they converge smoothly to $g$.
\end{proof}

We collect facts about min-max from many foundational works in the following lemma.

\begin{lemma}\label{lem:min-max}
Suppose the metric $g$ is weakly bumpy and $\partial M$ is strictly mean convex. If
\[ \bL(\Phi_0, M) > \sup_{z \in Z} \cH^n(\partial \Phi_0(z)), \]
then there is a 2-sided smooth closed embedded minimal hypersurface $\Sigma \subset M$ satisfying
\begin{itemize}
	\item $\cH^n(\Sigma) = \bL(\Phi_0, M)$,
	\item $1 \leq \ind(\Sigma) \leq k$.
\end{itemize}
\end{lemma}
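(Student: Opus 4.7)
The plan is to assemble the standard Almgren--Pitts/Zhou min-max machinery together with Zhou's multiplicity one theorem, the Marques--Neves index bounds, and the quantitative minimality of strictly stable minimal hypersurfaces. I would first run the pull-tight procedure on a minimizing sequence $\{\Phi_i\} \in \Pi(\Phi_0, M)$ to extract a stationary integral varifold $V$ on $M$ with $\|V\|(M) = \bL(\Phi_0, M)$. Strict mean convexity of $\partial M$, together with the maximum principle for stationary varifolds and the standard barrier modifications during pull-tight, forces $\spt(V) \subset \mathrm{Int}(M)$. Schoen--Simon regularity in dimensions $n+1 \leq 7$ then realizes $V = \sum_i m_i [\Sigma_i]$ for disjoint smooth closed embedded minimal hypersurfaces $\Sigma_i$ with integer multiplicities $m_i \geq 1$. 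Invoking Zhou's multiplicity one theorem, valid because the metric is weakly bumpy, I would arrange that every component $\Sigma_i$ is 2-sided and $m_i = 1$, so $\Sigma := \bigsqcup_i \Sigma_i$ satisfies $\cH^n(\Sigma) = \bL(\Phi_0, M)$, and the Marques--Neves upper index bound gives $\ind(\Sigma) \leq k$.

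For the lower bound $\ind(\Sigma) \geq 1$, I would argue by contradiction, using the strict gap hypothesis. If every component of $\Sigma$ were stable, then by weak bumpiness each component would be strictly stable, and the quantitative minimality result of \cite{local_minmax} (see also \cite{MN_morse_mult}) would produce an $\bF$-neighborhood of $\Sigma$ in $\cC(M)$ on which the boundary area is strictly minimized by $\Sigma$. I would then deform any nearly-optimal sweepout $\Phi_i$ off this neighborhood, strictly reducing its maximal slice area while keeping the values on $Z$ within a small $\bF$-neighborhood of $\Phi_0|_Z$. The gap $\bL(\Phi_0, M) > \sup_{z \in Z} \cH^n(\partial \Phi_0(z))$ ensures that the maximum of the area on $Z$ stays strictly below $\bL(\Phi_0, M)$, so the deformed competitor still lies in $\Pi(\Phi_0, M)$ but has limit maximal slice area strictly less than $\bL(\Phi_0, M)$, contradicting the definition of width.

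The hard part will be verifying that the three imported tools---Zhou's multiplicity one theorem, the Marques--Neves upper index bound, and the quantitative strict minimization of strictly stable minimal hypersurfaces---apply verbatim to the present formulation of min-max (cubical sweepouts into $\cC(M)$ in the $\bF$-topology, with a fixed subcomplex $Z$ on which the area is already controlled). This is precisely why the paper chooses to follow the setup of \cite{zhou_mult}, so that each of these results can be cited directly. A secondary subtlety is carrying out the deformation in the lower-index argument so that it yields a genuine element of $\Pi(\Phi_0, M)$ rather than merely a continuous family of sweepouts; this is where the strict inequality on $Z$ does the essential work, by providing the margin needed to absorb the small $\bF$-perturbations introduced by the deformation near $Z$.
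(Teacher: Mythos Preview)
Your proposal is correct and follows essentially the same approach as the paper: apply Zhou's multiplicity one theorem (with the mean convex boundary handled via constrained pull-tight and the maximum principle) to obtain the 2-sided hypersurface with $\cH^n(\Sigma) = \bL(\Phi_0, M)$ and $\ind(\Sigma) \leq k$, then use nondegeneracy plus the quantitative minimality result of \cite{local_minmax}/\cite{MN_morse_mult} to rule out stability and force $\ind(\Sigma) \geq 1$. The paper's proof is more compressed---it cites \cite[Theorem 4.1]{zhou_mult} as a black box rather than unpacking the pull-tight/regularity/index steps, and it explicitly notes the one extra wrinkle in the boundary case (choosing the prescribing function in the PMC approximation small relative to the mean curvature of $\partial M$)---but the ingredients and logic match yours.
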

\begin{proof}
We first review the proof in the closed case (i.e. $\partial M = \varnothing$). By \cite[Theorem 4.1]{zhou_mult}, there is a 2-sided smooth closed embedded minimal hypersurface $\Sigma \subset M$ satisfying $\cH^n(\Sigma) = \bL(\Phi_0, M)$ and $\ind(\Sigma) \leq k$. Since $\Sigma$ is nondegenerate, \cite[Theorem 1.1]{local_minmax} (see also \cite[Theorem 6.1]{MN_morse_mult}) implies that $\Sigma$ cannot be stable, so $\ind(\Sigma) \geq 1$.

In the case of strictly mean convex boundary, we observe as in \cite{wang_obstacle} (noting that by definition $\partial \Omega \cap \mathrm{Int}(M)$ for $\Omega \in \cC(M)$ is a cycle in $\mathrm{Int}(M)$) that we can perform the pull-tight procedure in the min-max construction using only isotopies generated by vector fields pointing into $M$ to produce constrained stationary varifolds, which are strictly contained in the interior of $M$ by the strict mean convexity of $\partial M$ and the strong maximum principle (see \cite[Theorem 7]{white_maximum}). The regularity theory, index lower bound, index upper bound, and approximation by PMC hypersurfaces are all local arguments and still apply in this setting\footnote{The observation that these local arguments extend to manifolds with strictly mean convex boundary can be seen in the proof of \cite[Theorem 6.6]{song_dich}.}. We note that we must choose the approximating prescribing function to be small relative the the positive lower bound on the mean curvature of $\partial M$ so that we can constrain the PMC solutions to the interior of $M$. There is no problem because we ultimately take the prescribing function to zero uniformly in the proof of \cite[Theorem 4.1]{zhou_mult}.
\end{proof}

The essential point of Lemma \ref{lem:min-max} is that the min-max hypersurface is \emph{unstable}. We can now remove the bumpiness assumption and argue by approximation.

\begin{theorem}\label{thm:compact-localization}
Suppose $\partial M$ is strictly mean convex. If
\[ \bL(\Phi_0, M) > \sup_{z\in Z} \cH^n(\partial \Phi_0(z)), \]
then there exists a smooth closed embedded minimal hypersurface $\Sigma \subset M$ satisfying
\begin{itemize}
	\item $\cH^n(\Sigma) \leq \bL(\Phi_0, \Span(\Phi_0))$,
	\item $\ind(\Sigma) \leq k$,
	\item $\Sigma \cap \Span(\Phi_0) \neq \varnothing$.
\end{itemize}
\end{theorem}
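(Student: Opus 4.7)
The plan is to first establish the theorem for weakly bumpy metrics via a finite iteration of Lemma~\ref{lem:min-max}, then remove the bumpiness assumption by approximation using Lemma~\ref{lem:generic} together with Sharp's compactness theorem.

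\emph{Bumpy case.} Assume $g$ is weakly bumpy. I construct inductively a sequence of compact submanifolds $M = M_0 \supset M_1 \supset \cdots$, each with smooth strictly mean convex boundary and each containing $\Span(\Phi_0)$ in its interior. At step $i$, the restricted sweepout $\Phi_0 \cap M_i : X \to \cC(M_i)$ satisfies the hypothesis of Lemma~\ref{lem:min-max}, since by monotonicity of width under inclusion of subdomains $\bL(\Phi_0 \cap M_i, M_i) \geq \bL(\Phi_0, M) > \sup_z \cH^n(\partial \Phi_0(z))$. Lemma~\ref{lem:min-max} then produces a 2-sided smooth closed embedded minimal hypersurface $\Sigma_i \subset M_i$ with $\cH^n(\Sigma_i) = \bL(\Phi_0 \cap M_i, M_i) \leq \bL(\Phi_0, \Span(\Phi_0))$ and $1 \leq \ind(\Sigma_i) \leq k$. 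If $\Sigma_i \cap \Span(\Phi_0) \neq \varnothing$, we stop. Otherwise, pick a connected unstable component $\Sigma_i' \subset \Sigma_i$ (which exists as the index is positive and additive over components, and each component is 2-sided), and set $M_{i+1} := M_i \setminus \overline{N_i}$, where $N_i$ is a sufficiently thin tubular neighborhood of $\Sigma_i'$ chosen disjoint from the compact set $\Span(\Phi_0)$.

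\emph{Strictly mean convex thickening.} The key geometric step is to choose $N_i$ so that $M_{i+1}$ has strictly mean convex boundary. Let $\phi_1 > 0$ be the first Jacobi eigenfunction on $\Sigma_i'$ with eigenvalue $\lambda_1 > 0$ (positive by instability). The first variation identity $\partial_t (H_t \cdot \nu)|_{t=0} = L\phi_1 = \lambda_1 \phi_1 > 0$ shows that the normal graph $\{x + t\phi_1(x)\nu(x) : x \in \Sigma_i'\}$ has mean curvature vector pointing in the $+\nu$ direction for small $t > 0$, i.e.\ away from $\Sigma_i'$; the symmetric statement holds for $t < 0$. Foliating $N_i$ by such graphs, $\partial N_i$ has mean curvature vector pointing out of $N_i$ and into $M_{i+1}$, so $M_{i+1}$ has strictly mean convex boundary and the iteration continues.

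\emph{Termination and general case.} The hypersurfaces $\{\Sigma_i'\}$ are pairwise distinct (since $\Sigma_i' \subset M_i$ while $\Sigma_j' \cap M_i = \varnothing$ for $j < i$) smooth closed embedded minimal hypersurfaces in $M$ with area $\leq \bL(\Phi_0, \Span(\Phi_0))$ and index $\leq k$. The weakly bumpy hypothesis makes each such hypersurface non-degenerate and hence isolated, so by Sharp's compactness theorem only finitely many exist in $M$; the iteration must therefore terminate, producing the desired $\Sigma$ meeting $\Span(\Phi_0)$. For arbitrary $g$, approximate by weakly bumpy metrics $g_\ell \to g$ via Lemma~\ref{lem:generic}; for $\ell$ large the hypothesis and strict mean convexity of $\partial M$ persist by smooth dependence of width and mean curvature on the metric. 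Applying the bumpy case to each $g_\ell$ and extracting a smooth subsequential limit via Sharp's compactness theorem, the limit $\Sigma$ satisfies all three conclusions by standard semi-continuity and the compactness of $\Span(\Phi_0)$.

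\emph{Main obstacle.} The main subtlety is the strictly mean convex thickening, which relies on the 2-sidedness of the unstable component $\Sigma_i'$ and a careful sign convention for the Jacobi operator to ensure $\partial N_i$ is mean convex \emph{with respect to $M_{i+1}$} rather than with respect to $N_i$. The termination argument, although elementary, crucially uses bumpiness to bound the pool of candidate hypersurfaces.
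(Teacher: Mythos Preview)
Your proof is correct and follows essentially the same approach as the paper: both exploit the mean convex foliation near a 2-sided unstable minimal hypersurface to carve out a strictly mean convex subdomain containing $\Span(\Phi_0)$, invoke Sharp's compactness together with weak bumpiness to guarantee only finitely many candidate hypersurfaces, and then pass to general metrics by approximation. The only organizational difference is that the paper removes, in a single step, tubular neighborhoods of a \emph{maximal disjoint} collection $\cD$ of such hypersurfaces and reaches an immediate contradiction, whereas you remove one unstable component at a time and argue that the iteration terminates; these are equivalent packagings of the same idea.
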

\begin{proof}
We begin by assuming that the metric $g$ is weakly bumpy.

Let $\cM$ denote the set of connected 2-sided smooth closed embedded minimal hypersurfaces $\Sigma \subset M$ satisfying
\begin{itemize}
	\item $\cH^n(\Sigma) \leq \bL(\Phi_0, \Span(\Phi_0))$,
	\item $1 \leq \ind(\Sigma) \leq k$.
\end{itemize}

By Sharp's compactness theorem \cite[Theorem 2.3]{sharp} $\cM$ is a finite set.

By Lemma \ref{lem:min-max} applied to $\bL(\Phi_0, M)$, $\cM \neq \varnothing$.

Suppose for contradiction that $\Sigma \cap \Span(\Phi_0) = \varnothing$ for all $\Sigma \in \cM$.

Let $\cD \subset \cM$ be a maximal disjoint subset; namely, (1) if $\Sigma, \Sigma' \in \cD$ and $\Sigma \cap \Sigma' \neq \varnothing$, then $\Sigma = \Sigma'$, and (2) every $\Sigma \in \cM$ satisfies $\Sigma \cap \Sigma' \neq \varnothing$ for some $\Sigma' \in \cD$. Such a set $\cD$ exists by induction.

Since each $\Sigma \in \cD$ is 2-sided and has index at least 1, there is a mean convex foliation (with curvature pointing away from $\Sigma$) in a neighborhood of $\Sigma$ obtained by flowing $\Sigma$ in the normal direction according to the positive first eigenfunction of the Jacobi operator (see \cite[Proof of Lemma 3.2]{HK}). Hence, by deleting a small neighborhood of each $\Sigma \in \cD$ (and then deleting any resulting components that do not contain $\Span(\Phi_0)$), there is an open set $W \subset M$ satisfying
\begin{itemize}
	\item $\Span(\Phi_0) \subset W$,
	\item $W$ is disjoint from $\Sigma$ for all $\Sigma \in \cD$,
	\item $W$ has smooth, strictly mean convex boundary.
\end{itemize}

By Lemma \ref{lem:min-max} for $\bL(\Phi_0, W)$, there is a smooth closed embedded minimal hypersurface $\Sigma^* \subset W$ so that $\Sigma^* \in \cM$, contradicting the defining property of $\cD$.

By Sharp's compactness theorem \cite[Theorem A.6]{sharp} for changing ambient metrics and Lemma \ref{lem:generic}, the conclusion now follows for arbitrary metrics.
\end{proof}

\section{Localization in Complete Non-Compact Manifolds}

Suppose $(M^{n+1}, g)$ is a smooth complete non-compact Riemannian manifold of dimension $3 \leq n+1 \leq 7$.

\begin{theorem}\label{thm:noncompact-localization}
If
\[ \bL(\Phi_0, M) > \sup_{z \in Z} \cH^n(\partial \Phi_0(z)), \]
then there exists a smooth complete (possibly non-compact) embedded minimal hypersurface $\Sigma \subset M$ satisfying
\begin{itemize}
	\item $\cH^n(\Sigma) \leq \bL(\Phi_0, \Span(\Phi_0))$,
	\item $\ind(\Sigma) \leq k$,
	\item $\Sigma \cap \Span(\Phi_0) \neq \varnothing$.
\end{itemize}
\end{theorem}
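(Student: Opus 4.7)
The plan is to reduce to the compact setting of Theorem \ref{thm:compact-localization} via a compact exhaustion equipped with suitably modified metrics, and then to extract a smooth embedded minimal limit using Sharp's compactness theorem. The main technical point will be the width comparison from $(M,g)$ to the compact modified approximations $(K_j, g_j)$, ensuring that the strict min-max hypothesis lifts to each $K_j$; once this is in hand, the rest is a routine application of Theorem \ref{thm:compact-localization} followed by a diagonal extraction.

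\emph{Compactification.} Fix a smooth compact exhaustion $\{K_j\}$ of $M$ by domains with smooth boundary, arranged so that $\Span(\Phi_0) \subset \intr(K_1)$. For each $j$, choose a smooth function $u_j \geq 1$ on $M$ with $u_j \equiv 1$ on a fixed open neighborhood $U$ of $\Span(\Phi_0)$ and on $K_{j-1}$, supported in a thin collar of $\partial K_j$ contained in $K_j \setminus K_{j-1}$, and with $\partial_\nu u_j$ at $\partial K_j$ large enough (where $\nu$ is the outward unit normal) that the conformal metric $g_j := u_j^2 g$ makes $\partial K_j$ strictly mean convex. This is possible by the conformal transformation law for the mean curvature.

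\emph{Applying compact localization.} We verify the hypothesis of Theorem \ref{thm:compact-localization} on $(K_j, g_j)$. Since $g_j = g$ on $U$, $\sup_{z \in Z} \cH^n_{g_j}(\partial \Phi_0(z)) = \sup_{z \in Z} \cH^n_g(\partial \Phi_0(z))$. The metrics $g$ and $g_j$ are bilipschitz equivalent on $K_j$, so the homotopy classes $\Pi_g$ and $\Pi_{g_j}$ over $\intr(K_j)$ coincide, while $u_j \geq 1$ forces $\cH^n_{g_j}(\partial \Phi_i(x)) \geq \cH^n_g(\partial \Phi_i(x))$ for every competitor. Combining this with the monotonicity remark, extended across ambient spaces by the observation that any competitor in $\Pi_g(\Phi_0, \intr(K_j))$ extends trivially to $\Pi_g(\Phi_0, M)$ without changing boundary areas, we obtain
\[
\bL_{g_j}(\Phi_0, \intr(K_j)) \geq \bL_g(\Phi_0, \intr(K_j)) \geq \bL_g(\Phi_0, M) > \sup_{z \in Z} \cH^n_g(\partial \Phi_0(z)).
\]
Theorem \ref{thm:compact-localization} then produces a smooth closed embedded $g_j$-minimal hypersurface $\Sigma_j \subset K_j$ with $\cH^n_{g_j}(\Sigma_j) \leq \bL_{g_j}(\Phi_0, \Span(\Phi_0))$, $\ind_{g_j}(\Sigma_j) \leq k$, and $\Sigma_j \cap \Span(\Phi_0) \neq \varnothing$. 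Monotonicity allows one to compute $\bL(\Phi_0, \Span(\Phi_0))$ as a supremum over bounded smooth $W \subset U$, on which $g = g_j$; hence $\bL_{g_j}(\Phi_0, \Span(\Phi_0)) = \bL_g(\Phi_0, \Span(\Phi_0))$.

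\emph{Passing to the limit.} Because the support of $u_j - 1$ is disjoint from $K_{j-1}$, for $j > j'$ the hypersurface $\Sigma_j \cap K_{j'}$ is smooth embedded $g$-minimal with area bounded by $\bL_g(\Phi_0, \Span(\Phi_0))$ and index at most $k$. Sharp's compactness theorem \cite{sharp} applied on the interior of $K_{j'+1}$, combined with a diagonal argument in $j'$, yields a subsequence converging smoothly on compact subsets of $M$ to a smooth complete embedded minimal hypersurface $\Sigma \subset M$. Lower semicontinuity of area gives $\cH^n(\Sigma) \leq \bL_g(\Phi_0, \Span(\Phi_0))$, and the index bound $\ind(\Sigma) \leq k$ is preserved under smooth (possibly multi-sheeted) convergence. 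Finally, any sequence $p_j \in \Sigma_j \cap \Span(\Phi_0)$ subconverges, by compactness of $\Span(\Phi_0)$, to some $p \in \Span(\Phi_0)$; smooth convergence near $p$ forces $p \in \Sigma$, so $\Sigma \cap \Span(\Phi_0) \neq \varnothing$.
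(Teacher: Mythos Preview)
Your proof is correct and follows essentially the same approach as the paper: exhaust $M$ by compact domains, modify the metric near each boundary to make it strictly mean convex while keeping it pointwise $\geq g$ and unchanged near $\Span(\Phi_0)$, apply Theorem~\ref{thm:compact-localization}, and pass to the limit via Sharp's compactness. The only cosmetic difference is that you achieve the metric modification by a conformal factor $u_j \geq 1$, whereas the paper uses a direct construction in Fermi coordinates near $\partial M_i$; both yield the same two key inequalities $\bL_{g_j}(\Phi_0,K_j) \geq \bL_g(\Phi_0,M)$ and $\bL_{g_j}(\Phi_0,\Span(\Phi_0)) = \bL_g(\Phi_0,\Span(\Phi_0))$.
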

\begin{proof}
Let $\{M_i\}_{i \in \N}$ be an exhaustion of $M$ by nested precompact open subsets with smooth boundary satisfying $\Span(\Phi_0) \subset M_1$.

We construct a modified metric $\tilde{g}_i$ on $M_i$ so that
\begin{itemize}
	\item $\tilde{g}_i$ agrees with $g$ outside $B_1(\partial M_i) \setminus B_1(\Span(\Phi_0))$,
	\item $\tilde{g}_i \geq g$ in the sense of bilinear forms,
	\item $\partial M_i$ has strictly mean convex boundary in $\tilde{g}_i$.
\end{itemize}
Namely, using Fermi coordinates in a small tubular neighborhood of $\partial M_i$, we can smoothly transition from the metric $g$ to a metric on the cylinder $(0, 1) \times \partial M_i$ that is pointwise larger than $g$ and so that $\{0\} \times M_i$ has strictly mean convex boundary.

Since $\tilde{g}_i \geq g$, we have $\bL(\Phi_0, M_i, \tilde{g}_i) \geq \bL(\Phi_0, M_i, g) \geq \bL(\Phi_0, M, g)$. Moreover, since $\tilde{g}_i = g$ on $B_1(\Span(\Phi_0))$, we have $\bL(\Phi_0, M_i, \tilde{g}_i) \leq \bL(\Phi_0, \Span(\Phi_0), \tilde{g}_i) = \bL(\Phi_0, \Span(\Phi_0), g)$.

By Theorem \ref{thm:compact-localization}, there is a smooth closed embedded minimal hypersurface $\Sigma_i \subset M_i$ (with respect to $\tilde{g}_i$) satisfying
\begin{itemize}
	\item $\cH^n(\Sigma_i) \leq \bL(\Phi_0, \Span(\Phi_0), g)$,
	\item $\ind_{\tilde{g}_i}(\Sigma) \leq k$,
	\item $\Sigma_i \cap \Span(\Phi_0) = \varnothing$.
\end{itemize}

By Sharp's compactness theorem \cite[Theorem 2.3]{sharp}\footnote{While the statement of \cite[Theorem 2.3]{sharp} concerns closed hypersurfaces in closed manifolds, the arguments are local and apply on compact subsets of complete manifolds.}, the sequence $\{\Sigma_i\}$ converges locally smoothly away from at most $k$ points to a smooth complete (not necessarily compact) embedded minimal hypersurface $\Sigma \subset M$ satisfying the conclusions of the theorem.
\end{proof}

\subsection{Comparison to \cite{CL}}\label{sec:comp}
We begin by observing that we use the Almgren-Pitts min-max framework as in \cite{zhou_mult} (for example), whereas \cite{CL} uses the constructions of \cite{delellis_tasnady}. Since the equivalence of these frameworks is besides the point, we make comparisons by analogy only.

Let $(M^{n+1}, g)$ be a smooth complete Riemannian manifold, and let $U \subset M$ be a bounded open set with smooth boundary.

In \cite{CL}, the \emph{width} of $U$, denoted $W(U)$, is defined to be the min-max width over families of hypersurfaces $\{\Sigma_t = \partial \Omega_t\}_{t\in [0, 1]}$ as in \cite{delellis_tasnady} so that $U \cap \Omega_0 = \varnothing$ and $U \subset \Omega_1$. The \emph{relative width} of $U$, denoted $W_{\partial}(U)$, is defined to be the min-max width over families as above that are furthermore nested, and where we only compute the area of each hypersurface inside $U$. Ultimately, \cite[Theorem 8.2]{CL} shows that for any $\delta > 0$, there is a smooth complete (not necessarily compact) embedded minimal hypersurface of area at most $W_{\partial}(U) + \cH^n(\partial U)$ intersecting the $\delta$-neighborhood of $U$ so long as
\[ W_{\partial}(U) \geq 10\cH^n(\partial U). \]

In our framework, let $\{\Sigma_t = \partial \Omega_t\}$ be a nested family of hypersurfaces satisfying $\Sigma_0 \sqcup \Sigma_1 = \partial U$. We can view this family as a map $\Phi_0 : [0, 1] \to \cC(M)$, which satisfies $\Span(\Phi_0) = \overline{U}$. We show that there is a smooth complete (not necessarily compact) embedded minimal hypersurface of area at most $\bL(\Phi_0, \overline{U})$ and index at most 1 intersecting $\overline{U}$ so long as
\[ \bL(\Phi_0, M) > \max\{\cH^n(\Sigma_0), \cH^n(\Sigma_1)\}. \]

We draw the analogy between the two frameworks.
\begin{itemize}
	\item $\bL(\Phi_0, M)$ is analogous to $W(U)$, and $W(U) \geq W_{\partial}(U)$.
	\item $\bL(\Phi_0, \overline{U})$ is analogous to a quantity bounded from above by $W_{\partial}(U) + \cH^n(\partial U)$. 
	\item $\max\{\cH^n(\Sigma_0), \cH^n(\Sigma_1)\} \leq \cH^n(\partial U)$.
\end{itemize}
Hence, our result has weaker hypotheses and stronger conclusions than \cite{CL}.

\begin{remark}
The result of \cite{CL} applies when the dimension is $n+1 \geq 3$, with the usual optimal non-smooth regularity when $n+1 > 7$. We emphasize that our proof only works when $3 \leq n+1 \leq 7$, which are the dimensions where \cite{CL} produces a \emph{smooth} minimal hypersurface.
\end{remark}

\section{Existence in Balanced Manifolds}\label{sec:balanced}
Let $(M^{n+1}, g)$ be a smooth complete non-compact Riemannian manifold of dimension $3 \leq n+1 \leq 7$.

Let $K \subset M$ be a compact set with smooth compact boundary. Let $E \subset M \setminus K$ be the disjoint union of some of the components of $M \setminus K$ (where we allow $E = \varnothing$ and $E = M \setminus K$). We let $F := (M\setminus K) \setminus E$.

Let $\alpha \in H_n(M, \Z/2\Z)$ be the homology class of $\partial E$ in $M$ (which is the same as the homology class of $\partial F$ in $M$). Let $\alpha_E \in H_n(E, \Z/2\Z)$ be the homology class of $\partial E$ in $E$. Let $\alpha_F \in H_n(F, \Z/2\Z)$ be the homology class of $\partial F$ in $F$.

Fix some $x_0 \in K$.

We define
\[ \cA_E := \lim_{r \to \infty} \inf \{\M(T) \mid T \in \alpha_E \mathrm{\ and\ } \spt(T) \subset E \setminus B_r(x_0) \}, \]
\[ \cA_F := \lim_{r \to \infty} \inf \{\M(T) \mid T \in \alpha_F \mathrm{\ and\ } \spt(T) \subset F \setminus B_r(x_0) \}. \]
For convenience, we let $\cA_E$ (resp. $\cA_F$) equal 0 if $E$ (resp. $F$) is bounded.

We say $(M, g)$ is \emph{balanced} if there is a choice of $K$, $x_0$, $E$, and $F$ so that $\cA_E = \cA_F$.

\begin{theorem}\label{thm:balanced}
Every complete balanced $(M^{n+1}, g)$ of dimension $3 \leq n+1 \leq 7$ admits a smooth complete (not necessarily compact) embedded minimal hypersurface of finite area and index at most 1.
\end{theorem}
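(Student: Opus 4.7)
The plan is a dichotomy argument comparing direct minimization in the homology class $\alpha := [\partial E] \in H_n(M; \Z/2\Z)$ with a min-max application of Theorem~\ref{thm:noncompact-localization}. Set $\beta := \inf\{\M(T) : T \in \alpha\}$; pushing $\partial E$ out to infinity inside $E$ or $F$ and using the definitions of $\cA_E, \cA_F$ gives $\beta \leq \cA$, where $\cA := \cA_E = \cA_F$ is the common asymptotic area guaranteed by the balanced hypothesis.

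\emph{Case A (minimization).} If $\beta$ is attained by a cycle of bounded support, I am done. This case subsumes the subcase $\beta < \cA$: any minimizing sequence $T_i$ escaping to infinity would, after passing to a subsequence, be contained in $E$ or in $F$ outside arbitrarily large balls about $x_0$, and a decomposition of $T_i$ shows that its restriction to the relevant end represents $\alpha_E$ or $\alpha_F$ up to a boundary piece near $K$ of vanishing mass, giving $\M(T_i) \geq \cA - o(1)$, contradicting $\M(T_i) \to \beta < \cA$. Hence minimizing sequences stay in a bounded region; integral current compactness yields a minimizer $T^* \in \alpha$ of mass $\beta$, and Federer's regularity theorem for codimension-one area-minimizing currents in ambient dimension $\leq 7$ promotes $T^*$ to a smooth embedded closed stable minimal hypersurface of finite area and index $0$.

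\emph{Case B (min-max).} Suppose $\beta = \cA$ is not attained in any bounded region, so that $\beta(R) := \inf\{\M(T) : T \in \alpha,\ \spt T \subset \overline{B_R(x_0)}\} > \cA$ strictly for every $R > 0$. Fix $R$ with $\beta(R) > \cA + 2\delta$ for some $\delta > 0$, and for $R' \gg R$ pick $\Omega^- \subset E$ with $\partial \Omega^- \subset E \setminus \overline{B_{R'}(x_0)}$, $\partial \Omega^- \in \alpha_E$, and $\M(\partial \Omega^-) \leq \cA + \delta$, and analogously $\Omega^+ \subset F$. Construct $\Phi : [0, 1] \to \cC(M)$ with $\Phi(0) = \Omega^-$, $\Phi(1) = M \setminus \Omega^+$, interpolating $\bF$-continuously through a monotone family (e.g.\ sub-level sets of a smooth proper Morse function on $M$ adapted to the two ends). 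The boundary values then satisfy $\sup_{z \in \{0,1\}} \cH^n(\partial \Phi(z)) \leq \cA + \delta$. For the width lower bound, given any $\Psi \in \Pi(\Phi, M)$ I select, via an intermediate-value argument on the volume $|\Psi(t) \cap \overline{B_R(x_0)}|$ combined with a coarea selection of a generic radius $\rho \in (R, R')$ giving small transverse mass $\M(\partial \Psi(t) \mres \partial B_\rho(x_0))$, an intermediate slice whose restriction $\partial \Psi(t) \mres \overline{B_\rho(x_0)}$ can be closed off by an isoperimetric cap on $\partial B_\rho(x_0)$ of mass $o(1)$ to produce a cycle in $\alpha$ supported in $\overline{B_\rho(x_0)}$ of mass at most $\sup_t \cH^n(\partial \Psi(t)) + o(1)$. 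This yields $\bL(\Phi, M) \geq \beta(R) - o(1) > \cA + \delta \geq \sup_{z \in \{0,1\}} \cH^n(\partial \Phi(z))$, and Theorem~\ref{thm:noncompact-localization} then supplies a smooth complete embedded minimal hypersurface of finite area and index at most $1$.

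The main obstacle is the closing-up step in the width lower bound: extracting, from an arbitrary $\Psi \in \Pi(\Phi, M)$, a genuine cycle in $\alpha$ supported in $\overline{B_\rho(x_0)}$ whose mass is controlled by the slice mass, rather than merely a chain with boundary on $\partial B_\rho(x_0)$. This is where the balanced assumption is essential: symmetric boundary values $\cA + \delta$ on the two ends allow the strict gap $\beta(R) - \cA > 0$ to be inherited by the width without being consumed by an asymmetric offset between $\cA_E$ and $\cA_F$. Making the slicing/coarea cap-off rigorous within the Almgren-Pitts framework of the earlier sections is the technical heart of the argument.
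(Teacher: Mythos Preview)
Your overall strategy---compare minimization in $\alpha$ against a one-parameter min-max and invoke Theorem~\ref{thm:noncompact-localization} when the width strictly exceeds the boundary values---matches the paper's, but the paper's argument is a \emph{trichotomy}, not a dichotomy. Besides the minimization case ($\area(\alpha)<\cA$, Lemma~\ref{lem:noncompact_min}) and the min-max case ($\width(\alpha)>\cA$, Lemma~\ref{lem:noncompact_minmax}), there is a separate rigidity case $\area(\alpha)=\width(\alpha)$ (Lemma~\ref{lem:noncompact_rigidity}), handled by a Lyusternik--Schnirelmann argument: every optimal sweepout is then a minimizing sequence in $\alpha$, an intermediate-value argument on enclosed volume in a fixed small domain $D$ forces a definite amount of mass of some slice into $D$, and stable compactness then produces a minimal hypersurface meeting $D$. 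Shrinking $D$ gives one through any prescribed point. The balanced hypothesis $\cA_E=\cA_F$ is exactly what makes the three alternatives exhaustive, since $\area(\alpha)\leq\width(\alpha)$ always.

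Your Case~B tries to absorb this rigidity regime into the min-max case, and that is where the genuine gap sits. The bound you assert, $\bL(\Phi,M)\geq\beta(R)-o(1)$, is not what your closing-up construction actually delivers: capping off at a radius $\rho\in(R,R')$ produces a competitor supported in $\overline{B_\rho}$, whose mass is bounded below only by $\beta(\rho)$, not $\beta(R)$. Since $\beta$ is nonincreasing in the radius and the coarea step needs $R'-R$ large to make the transverse slice small, you only recover $\bL\geq\beta(\rho)-o(1)$ with $\rho$ possibly close to $R'$; as $R'\to\infty$ this collapses to $\bL\geq\cA$, which gives no strict gap over the boundary values $\cA+\delta$. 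There is moreover no uniform isoperimetric control on the geodesic spheres $\partial B_\rho$ in a general complete manifold, so the cap need not be $o(1)$ even when the slice is, and it is not clear that the capped-off cycle lands in the correct class $\alpha$. You rightly flag this step as the main obstacle, but it is not merely technical: in the borderline situation $\area(\alpha)=\width(\alpha)=\cA$ the strict inequality required by Theorem~\ref{thm:noncompact-localization} can simply fail for every admissible $\Phi$, and the paper's Lyusternik--Schnirelmann lemma is precisely what rescues the conclusion there.
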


\begin{remark}
If $(M, g)$ is furthermore \emph{thick at infinity}, meaning that any connected finite area complete minimal hypersurface in $(M, g)$ is closed (see \cite{song_dich}), then the minimal embedding is closed.
\end{remark}

\begin{remark}\label{rem:comp}
We emphasize that Theorem \ref{thm:balanced} is completely new when $\cA_E = \cA_F > 0$, as the localization results of \cite{CL} and \cite{song_dich} are insufficient to handle these cases.
\end{remark}

\subsection{Examples}
We present two simple classes of manifolds that satisfy the hypotheses of Theorem \ref{thm:balanced}.

\begin{example}[Finite Volume Manifolds]
Suppose $(M^{n+1}, g)$ has finite volume. We can define $K = \overline{B}_{2\eps}(x) \setminus B_{\eps}(x)$ for some $x \in M$ and $\eps < \mathrm{inj}(M, p)$, and let $E = B_\eps(x)$ and $F = M \setminus \overline{B}_{2\eps}(x)$. Since $E$ is bounded, we have $\cA_E = 0$. Since $M$ has finite volume, $\cA_F = 0$.

In fact, the same argument applies to any manifold admitting an exhaustion by compact sets with smooth boundaries whose areas tend to zero\footnote{In fact, the proof in this case follows immediately from the relative isoperimetric inequality in a compact subset with smooth boundary and Theorem \ref{thm:main-loc}.}, as in \cite{song_dich}.
\end{example}

\begin{example}[Balanced Asymptotically Cylindrical Manifolds]
Let $(M^{n+1}, g)$ be \emph{asymptotically cylindrical}. Namely, there is a compact subset $K \subset M$, a smooth closed (not necessarily connected) Riemannian manifold $(N^n, h)$, and a diffeomorphism $\phi: \R_+ \times N \to M \setminus K$ so that
\[ \limsup_{r \to \infty} \|(dt^2 \oplus h) - \phi^*g\|_{C^{\infty}((r, \infty) \times N)} = 0. \]
These manifolds generalize the manifolds with cylindrical ends considered by \cite{song_inf}, although they need not be rigid on any finite scale.

If there is a decomposition of $N = R \sqcup S$ so that each of $R$ and $S$ is the disjoint union of components of $N$ and $\cH^n(R) = \cH^n(S) = \frac{1}{2}\cH^n(N)$, then $(M, g)$ is balanced and Theorem \ref{thm:balanced} applies.
\end{example}

\begin{example}[Unbalanced]
The balanced condition in Theorem \ref{thm:balanced} is necessary, as demonstrated by the following example. Let $(\R \times S^n, g)$ be a warped product metric given by
\[ g = dt^2 \oplus f(t)g_{S^n}, \]
where $g_{S^n}$ is the round metric on $S^n$ and $f : \R \to (1/2, 2)$ is a strictly decreasing smooth function. Then $(M, g)$ has a strictly mean convex foliation and bounded geometry, so $(M, g)$ does not admit any finite area embedded minimal hypersurfaces. Importantly, since $\lim_{t \to -\infty} f(t) > \lim_{t \to \infty} f(t)$, $(M, g)$ is not balanced.
\end{example}

\subsection{Proof of Theorem \ref{thm:balanced}}
We define two geometric invariants.

\begin{definition}
The \emph{area} of the homology class $\alpha \in H_n(M, \Z/2\Z)$ is
\[ \area(\alpha) := \inf_{T \in \alpha} \M(T). \]
\end{definition}

\begin{definition}
Let
\[ \cA_E^r := \inf\{\M(T) \mid T \in \alpha_E \text{\ and\ } \spt(T) \subset E \setminus B_r(x_0)\}, \]
\[ \cA_F^r := \inf\{\M(T) \mid T \in \alpha_F \text{\ and\ } \spt(T) \subset F \setminus B_r(x_0)\}. \]
Let $\cS_r$ denote the set of $\bF$-continuous maps $\Phi : [0, 1] \to \cC(M)$ so that\footnote{In the case that (without loss of generality) $E$ is bounded, we instead require that $\partial \Phi(0) \subset E$ and satisfies $\cH^n(\partial \Phi(0)) \leq 1/r$.}
\begin{itemize}
	\item $\partial\Phi(t) \in \alpha$ for all $t\in [0, 1]$,
	\item $\partial \Phi(0) \subset E \setminus B_r(x_0)$ and satisfies $\cH^n(\partial \Phi(0)) \leq \cA_E^r + \frac{1}{r}$,
	\item $\partial \Phi(1) \subset F \setminus B_r(x_0)$ and satisfies $\cH^n(\partial \Phi(1)) \leq \cA_F^r + \frac{1}{r}$.
\end{itemize}
The \emph{width} of the homology class $\alpha \in H_n(M, \Z/2\Z)$ is
\[ \width(\alpha) := \limsup_{r \to \infty} \sup_{\Phi \in \cS_r} \bL(\Phi, M). \]
\end{definition}

Note that we always have $\area(\alpha) \leq \width(\alpha)$.

\begin{lemma}\label{lem:noncompact_min}
If $\area(\alpha) < \min\{\cA_E, \cA_F\}$, then there is a smooth complete (not necessarily compact) embedded minimal hypersurface $\Sigma \subset M$ satisfying
\begin{itemize}
	\item $\cH^n(\Sigma) \leq \area(\alpha)$,
	\item $\ind(\Sigma) = 0$.
\end{itemize}
\end{lemma}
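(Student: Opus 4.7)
The plan is to realize $\Sigma$ as a mass-minimizer in the homology class $\alpha$, obtained as the limit of minimizers on a compact exhaustion with modified metric. For each large $r$, I would perturb the metric $g$ in a neighborhood of $\partial B_r(x_0)$ to obtain $\tilde g_r$ that agrees with $g$ on $B_{r-1}(x_0)$, satisfies $\tilde g_r \geq g$ pointwise, and has strictly mean convex boundary $\partial B_r(x_0)$, exactly as in the proof of Theorem~\ref{thm:noncompact-localization}. On $(\overline{B}_r(x_0), \tilde g_r)$, I would minimize $\M(\partial \Omega)$ over finite-perimeter sets $\Omega \subset \overline{B}_r(x_0)$ such that $\Omega \cap \partial B_r(x_0) = E \cap \partial B_r(x_0)$, so that $\partial \Omega$ represents $\alpha$. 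Federer--Fleming compactness yields a minimizer $\Omega_r$; the strict mean convexity together with the strong maximum principle keeps $T_r := \partial \Omega_r$ off of $\partial B_r(x_0)$; and interior regularity in dimensions $3 \leq n+1 \leq 7$ then makes $T_r$ a smooth closed embedded two-sided minimal hypersurface with $\ind(T_r) = 0$. Gluing any compactly-supported competitor $T \in \alpha$ to $E$ outside $B_{r-1}(x_0)$ gives $\M(T_r) \leq \area(\alpha) + o(1)$.

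I would then apply the Schoen--Simon curvature estimates for stable minimal hypersurfaces (valid in dimensions $\leq 7$) to extract a subsequential smooth local limit $\Sigma \subset M$---a smooth complete embedded stable minimal hypersurface with $\cH^n(\Sigma) \leq \area(\alpha)$ and $\ind(\Sigma) = 0$. The crucial remaining step is to show $\Sigma$ is not empty. I would argue by contradiction: if $\Sigma = \varnothing$, then along a subsequence $T_r \cap B_R(x_0) = \varnothing$ for every $R$ and all sufficiently large $r$; choosing $R > \diam(K)$, $T_r$ decomposes as $T_r^E + T_r^F$ supported in $E \setminus B_R(x_0)$ and $F \setminus B_R(x_0)$ respectively.

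Minimality of $\Omega_r$ prevents any connected component $C$ of $T_r$ from having $[C]_M = 0$ in $H_n(M, \Z/2\Z)$: any such $C = \partial W$ could be removed via the surgery $\Omega_r \mapsto \Omega_r \Delta W$, strictly decreasing the mass. A slicing-and-capping argument at $\partial B_R(x_0)$ (filling the small slices of $T_r^E$ by small-mass chains on $\partial B_R(x_0) \cap E$, and similarly for $F$) then shows that each nontrivial-class component of $T_r^E$ has mass at least $\cA_E^R - o(1)$, and similarly for $T_r^F$ with $\cA_F^R$. Since $[T_r^E]_M + [T_r^F]_M = \alpha$ in $H_n(M, \Z/2\Z)$, a short case analysis on the nonemptiness of $T_r^E$ and $T_r^F$ gives $\M(T_r) \geq \cA_E$, $\cA_F$, or $\cA_E + \cA_F$ in the limit, in each case contradicting $\M(T_r) \to \area(\alpha) < \min\{\cA_E, \cA_F\}$. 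I expect the hardest detail to be the slicing-and-capping estimate and its homology-theoretic justification, i.e.\ identifying the $H_n(E)$-class of any nontrivial cycle-at-infinity in $E$ with $\alpha_E$ (so that $\cA_E^R$ actually bounds its mass from below).
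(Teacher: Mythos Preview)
Your outline is correct in spirit and would produce the desired hypersurface, but it is considerably more elaborate than what the paper does, and the step you flag as hardest can be avoided entirely.

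The paper's argument dispenses with the metric modification: it simply takes a smooth compact exhaustion $\{\Omega_i\}$ with $K\subset\Omega_1$, and for each $i$ minimizes mass among mod-$2$ flat cycles in the class $\alpha$ with support in $\overline{\Omega}_i$. Compactness of currents gives a minimizer $T_i^*$ with $\M(T_i^*)=\area_i(\alpha)\to\area(\alpha)$, and since $T_i^*$ is locally area-minimizing in $\Omega_i$, interior regularity already makes $T_i^*\llcorner\Omega_i$ a smooth stable embedded hypersurface. There is no need to push $T_i^*$ off $\partial\Omega_i$ via a mean-convex perturbation; one only needs smoothness on compact subsets of $M$ to pass to a local limit.

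For nonemptiness, the slicing-and-capping argument and the component-by-component homology bookkeeping you propose are unnecessary. Because $T_i^*\in\alpha=[\partial E]$, one may write $T_i^*=\partial V_i$ for a set $V_i$ of locally finite perimeter with $V_i\Delta E$ bounded. If $\spt(T_i^*)\cap B_R(x_0)=\varnothing$ for some $R$ with $K\subset B_R(x_0)$, then $V_i$ either contains or is disjoint from $B_R(x_0)$; in the first case $V_i\cap F$ is a bounded set in $F$ containing a collar of $\partial F$, so $T_i^*\cap F=\partial(V_i\cap F)$ lies in $\alpha_F$ and has mass at least $\cA_F^R$, and the second case gives the analogous bound by $\cA_E^R$. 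Either way $\M(T_i^*)\geq\min\{\cA_E^R,\cA_F^R\}$, which for large $R$ exceeds $\area(\alpha)$ and contradicts $\M(T_i^*)\to\area(\alpha)$. This replaces your surgery-on-nullhomologous-components and capping estimates with a single clean dichotomy, and in particular resolves the ``identifying the $H_n(E)$-class'' issue you anticipated without any work.
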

\begin{proof}
Let $\{\Omega_i\}_{i \in \N}$ be an exhaustion of $M$ by precompact connected nested open sets with smooth boundary. We suppose without loss of generality that $K \subset \Omega_i$ for all $i$.

We define
\[ \area_i(\alpha) := \inf_{T \in \alpha \mid \spt(T) \subset \overline{\Omega}_i} \M(T). \]
Since the sets $\Omega_i$ are nested, we have $\area_i(\alpha) \geq \area_j(\alpha)$ if $i < j$. Since $\Omega_i$ is an exhaustion of $M$ and each $T \in \alpha$ has compact support, we have
\[ \lim_{i \to \infty} \area_i(\alpha) = \area(\alpha). \]

By the standard compactness theory for currents (see \cite[Theorem 6.8.2]{simon}), there is a current $T_i^* \in \alpha$ satisfying $\spt(T_i^*) \subset \overline{\Omega}_i$ and $\M(T_i^*) = \area_i(\alpha)$. Since $T_i^*$ is area minimizing in $\Omega_i$, $T_i^*\llcorner \Omega_i$ is the current of a smooth stable embedded minimal hypersurface with integer multiplicity (see \cite[Theorem 7.5.8]{simon}).

Since $\min\{\cA_E, \cA_F\} > \area(\alpha)$, there is a compact set $C \subset M$ so that $\spt(T_i^*) \cap C \neq \varnothing$ for all $i$ sufficiently large.

By the curvature estimates for stable minimal immersions \cite[Corollary 1]{schoen_simon}, there is a subsequence of $\{T_i^*\}_{i \in \N}$ that converges locally smoothly to a nontrivial smooth complete (not necessarily closed) stable embedded minimal hypersurface $\Sigma \subset M$ satisfying $\cH^n(\Sigma) \leq \area(\alpha)$.
\end{proof}

\begin{lemma}\label{lem:noncompact_minmax}
If $\width(\alpha) > \max\{\cA_E, \cA_F\}$, then there is a smooth complete (not necessarily compact) embedded minimal hypersurface $\Sigma \subset M$ satisfying
\begin{itemize}
	\item $\cH^n(\Sigma) < \infty$,
	\item $\ind(\Sigma) \leq 1$.
\end{itemize}
\end{lemma}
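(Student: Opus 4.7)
The plan is to apply Theorem \ref{thm:noncompact-localization} to a single sweepout $\Phi \in \cS_r$ for some sufficiently large $r$, with the one-parameter cubical complex $X = [0,1]$ and boundary subcomplex $Z = \{0,1\}$, so that $k = 1$ and the theorem outputs a complete embedded minimal hypersurface of index at most $1$. The entire content of the argument is to extract from the hypothesis $\width(\alpha) > \max\{\cA_E, \cA_F\}$ a sweepout that simultaneously has large width and small endpoint area, thereby satisfying the gap hypothesis of Theorem \ref{thm:noncompact-localization}.

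First I would fix $\eps > 0$ with $\width(\alpha) > \max\{\cA_E, \cA_F\} + 3\eps$. By the $\limsup$ in the definition of $\width(\alpha)$, there exist arbitrarily large $r$ and sweepouts $\Phi \in \cS_r$ satisfying
\[ \bL(\Phi, M) > \max\{\cA_E, \cA_F\} + 2\eps. \]
Simultaneously, the quantities $\cA_E^r$ and $\cA_F^r$ are monotonically non-decreasing in $r$ (shrinking the admissible support forbids more competitors), and they converge to $\cA_E$ and $\cA_F$ respectively, so in particular $\cA_E^r \leq \cA_E$ and $\cA_F^r \leq \cA_F$ for all $r$. Choosing $r > 1/\eps$, the defining properties of $\cS_r$ then yield
\[ \cH^n(\partial \Phi(0)) \leq \cA_E^r + 1/r \leq \cA_E + \eps, \qquad \cH^n(\partial \Phi(1)) \leq \cA_F^r + 1/r \leq \cA_F + \eps, \]
(and the same holds in the degenerate boundary case, where one endpoint has area tending to zero). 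Combining these gives
\[ \sup_{z \in \{0,1\}} \cH^n(\partial \Phi(z)) \leq \max\{\cA_E, \cA_F\} + \eps < \bL(\Phi, M), \]
which is precisely the strict gap required by Theorem \ref{thm:noncompact-localization}.

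Applying Theorem \ref{thm:noncompact-localization} to $\Phi$ with $X = [0,1]$, $Z = \{0,1\}$, and $k = 1$ produces a smooth complete embedded minimal hypersurface $\Sigma \subset M$ with $\ind(\Sigma) \leq 1$ and
\[ \cH^n(\Sigma) \leq \bL(\Phi, \Span(\Phi)) \leq \sup_{x \in [0,1]} \cH^n(\partial \Phi(x)) < \infty, \]
using the universal upper bound on width recorded in the remark following Definition 2.7 (the sweepout $\Phi$ is fixed, hence has finite supremal slice area). This $\Sigma$ satisfies the two conclusions of the lemma.

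The only real bookkeeping obstacle is making sure the \emph{same} parameter $r$ and the \emph{same} sweepout $\Phi$ witness the lower bound on $\bL(\Phi, M)$ and the upper bounds on $\cH^n(\partial \Phi(0))$ and $\cH^n(\partial \Phi(1))$ at once; this is handled cleanly by the $\limsup$ in the definition of $\width(\alpha)$ combined with the monotonicity of $\cA_E^r$ and $\cA_F^r$. After that, the lemma is an immediate application of the main localization theorem, with no further analytic input required.
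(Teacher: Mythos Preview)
Your proof is correct and follows essentially the same approach as the paper: extract from the hypothesis a sweepout $\Phi \in \cS_r$ satisfying the gap condition $\bL(\Phi, M) > \max\{\cH^n(\partial \Phi(0)), \cH^n(\partial \Phi(1))\}$, then apply Theorem~\ref{thm:noncompact-localization} with $X=[0,1]$, $Z=\{0,1\}$, $k=1$. You have simply spelled out the bookkeeping (the choice of $\eps$, the monotonicity $\cA_E^r \leq \cA_E$, the bound $1/r < \eps$) that the paper compresses into the single phrase ``by assumption.''
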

\begin{proof}
By assumption, there is an $\bF$-continuous map $\Phi: [0,1] \to \cC(M)$ satisfying
\[ \width(\alpha) \geq \bL(\Phi, M) > \max\{\cH^n(\partial \Phi(0)), \cH^n(\partial \Phi(1))\}. \]
Hence, Theorem \ref{thm:noncompact-localization} applies, and the conclusion follows.
\end{proof}

\begin{lemma}\label{lem:noncompact_rigidity}
If $\area(\alpha) = \width(\alpha)$, then for every $x \in M$ there is a smooth complete (not necessarily compact) embedded minimal hypersurface $\Sigma_x \subset M$ satisfying
\begin{itemize}
	\item $\cH^n(\Sigma_x) \leq \area(\alpha)$,
	\item $\ind(\Sigma_x) = 0$,
	\item $x \in \Sigma_x$.
\end{itemize}
In particular, there are infinitely many such minimal embeddings.
\end{lemma}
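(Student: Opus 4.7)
The plan is to produce cycles in $\alpha$ of nearly minimal mass whose supports are forced to meet every small ball around $x$, and then extract a smooth limit via Allard-type regularity for almost-minimizers combined with the Schoen-Simon curvature estimates.

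First I would establish the following \emph{support claim}: for each $\eps > 0$, there is a cycle $T \in \alpha$ with $\M(T) \leq \area(\alpha) + \eps$ and $\spt(T) \cap \overline{B}_\eps(x) \neq \varnothing$. Fix $r$ large enough that $\overline{B}_\eps(x) \subset B_r(x_0)$. By $\width(\alpha) = \area(\alpha)$, for $r$ sufficiently large every $\Phi \in \cS_r$ satisfies $\bL(\Phi, M) \leq \area(\alpha) + \eps/4$. I choose such a $\Phi$ with the natural structure $\Phi(0) \cap B_r(x_0) = \varnothing$ and $\Phi(1) \supset B_r(x_0)$ (taking $\Phi(0)$ to be a bounded region in the far-$E$ side and $\Phi(1)$ to be its complement relative to a bounded region in the far-$F$ side); this is possible because $\partial \Phi(0)$ and $\partial \Phi(1)$ lie outside $B_r(x_0)$, so each of $\Phi(0), \Phi(1)$ is locally constant on $B_r(x_0)$. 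Pick $\{\Phi_j\} \in \Pi(\Phi, M)$ with $\limsup_j \sup_t \cH^n(\partial \Phi_j(t)) \leq \area(\alpha) + \eps/4$. The $\bF$-closeness on $Z = \{0, 1\}$ yields $|\Phi_j(0) \cap B_\eps(x)| < \tfrac14 |B_\eps(x)|$ and $|\Phi_j(1) \cap B_\eps(x)| > \tfrac34 |B_\eps(x)|$ for $j$ large. By $\bF$-continuity in $t$, some $t_j$ satisfies $|\Phi_j(t_j) \cap B_\eps(x)| = \tfrac12 |B_\eps(x)|$, and the relative isoperimetric inequality on $B_\eps(x)$ then forces $\cH^n(\partial \Phi_j(t_j) \cap B_\eps(x)) \geq c_n \eps^n$. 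Taking $T := \partial \Phi_j(t_j)$ for $j$ large proves the claim.

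Applying the support claim with $\eps = 1/i$ produces a sequence of cycles $T_i \in \alpha$ with $\M(T_i) \to \area(\alpha)$ together with points $p_i \in \spt(T_i) \cap \overline{B}_{1/i}(x)$, so $p_i \to x$. Since $\M(T') \geq \area(\alpha)$ for every $T' \in \alpha$, the sequence $\{T_i\}$ is globally $o(1)$-almost area-minimizing in $\alpha$. Standard compactness of integral cycles of bounded mass combined with Allard-type regularity for almost-minimizers and the Schoen-Simon curvature estimates (valid since $3 \leq n+1 \leq 7$) shows that $|T_i|$ subconverges locally smoothly to the varifold of a smooth complete embedded stable minimal hypersurface $\Sigma_x \subset M$ with $\cH^n(\Sigma_x) \leq \area(\alpha)$, hence $\ind(\Sigma_x) = 0$. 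The uniform monotonicity formula for almost-stationary varifolds gives $\Theta^n(|T_i|, p_i) \geq 1 - o(1)$, preventing mass from vanishing near $x$ in the limit and thus forcing $x \in \Sigma_x$.

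The main technical obstacle lies in the support claim, where one must certify that some slice of the minimizing sweepout genuinely makes essential contact with $B_\eps(x)$. This relies both on the topological structure of sweepouts in $\cS_r$ (the boundaries $\partial \Phi(0), \partial \Phi(1)$ pushed outside $B_r(x_0)$ force $\Phi(0), \Phi(1)$ to differ across $B_r(x_0)$, so the $\bF$-continuous interpolation must sweep past $x$) and on a quantitative relative isoperimetric lower bound for the slice's perimeter in $B_\eps(x)$. The ``in particular, infinitely many'' conclusion follows because each $\Sigma_x$ has $\cH^n$-measure at most $\area(\alpha) < \infty$, so the family $\{\Sigma_x\}_{x \in M}$ cannot consist of only finitely many distinct hypersurfaces in the non-compact $(n+1)$-manifold $M$.
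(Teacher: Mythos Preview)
Your overall strategy matches the paper's: exploit $\width(\alpha)=\area(\alpha)$ to find, via the relative isoperimetric inequality applied to a slice of an optimal sweepout, cycles in $\alpha$ of nearly minimal mass that are forced to meet a prescribed neighborhood of $x$, and then pass to a limit. The support claim and its proof are essentially what the paper does.

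The gap is in your one-step limit. You take $\eps=1/i\to 0$, obtain cycles $T_i$ with $\M(T_i)\to\area(\alpha)$ and $p_i\in\spt(T_i)\cap\overline{B}_{1/i}(x)$, and then assert that ``the uniform monotonicity formula for almost-stationary varifolds gives $\Theta^n(|T_i|,p_i)\geq 1-o(1)$''. But the $T_i$ are \emph{not} almost-stationary in any sense that yields monotonicity: having global mass excess $o(1)$ over the homology minimum gives no local first-variation control. Concretely, suppose $\Omega_\infty$ is the minimizer in $\alpha$ and $x$ lies in the interior of $\Omega_\infty$ away from $\partial\Omega_\infty$; set $\Omega_i=\Omega_\infty\setminus B_{\delta_i}(x)$ with $\delta_i\sim 1/i$. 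Then $T_i=\partial\Omega_i$ satisfies your support claim at scale $1/i$ with mass excess $c_n\delta_i^n\to 0$, yet $T_i\to\partial\Omega_\infty$ with no mass surviving near $x$, so $x\notin\Sigma_x$. The same issue undermines your appeal to ``Allard-type regularity for almost-minimizers'': the $T_i$ are arbitrary Caccioppoli boundaries and need not be smooth nor converge smoothly.

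The paper avoids this by a two-step limit. First it fixes a compact $D\ni x$ and picks slices with $\cH^{n+1}(\Phi_i(t_i)\cap D)=\tfrac12\cH^{n+1}(D)$; this \emph{volume} constraint passes to the $L^1_{\mathrm{loc}}$ limit of the Caccioppoli sets, so the limiting minimizer $\Sigma_D$ (smooth by regularity of exact minimizers) must meet $D$. Only then does it shrink $D\searrow\{x\}$ and invoke Schoen--Simon compactness for the \emph{smooth stable minimal} hypersurfaces $\Sigma_D$---which do satisfy monotonicity and curvature estimates---to produce $\Sigma_x$ with $x\in\Sigma_x$. Your argument is easily repaired by inserting this intermediate step.
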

\begin{proof}
Fix $x \in M$. Fix $D \subset M$ a compact set with smooth boundary whose interior contains $x$. By the relative isoperimetric inequality, there is a constant $c > 0$ so that for any $\Omega \in \cC(M)$ satisfying $\cH^{n+1}(\Omega \cap D) = \frac{1}{2}\cH^{n+1}(D)$, we have $\cH^n(\partial \Omega \cap D) \geq c$.

Let $r_j \to \infty$, and let $\Phi^j \in \cS_{r_j}$. Let $\{\Phi_i^j\}_{i \in \N} \in \Pi(\Phi^j, M)$ so that
\[ \limsup_{i \to \infty} \sup_{x\in X} \cH^n(\partial \Phi_i^j(x)) = \bL(\Phi^j, M), \]
which exists by taking a diagonal sequence.
Since $D \subset B_{r_j}(x_0)$ for all $j$ (without loss of generality), there is some $t_i^j \in [0, 1]$ so that $\cH^{n+1}(\Phi_i^j(t_i^j) \cap D) = \frac{1}{2}\cH^{n+1}(D)$. Moreover, by the assumption that $\width(\alpha) = \area(\alpha)$, there is a diagonal sequence so that $\Sigma_j:= \partial \Phi_{i_j}^j(t_{i_j}^j)$ is a minimizing sequence for area in $\alpha$ and satisfies $\cH^n(\Sigma_j \cap D) \geq c$ for all $j$. Hence, there is a smooth complete (not necessarily compact) embedded stable minimal hypersurface $\Sigma \subset M$ satisfying $\cH^n(\Sigma) \leq \area(\alpha)$ and $\Sigma \cap D \neq \varnothing$.

By taking the set $D$ smaller, the compactness theory for stable minimal hypersurfaces with uniform area bounds (see \cite[Corollary 1]{schoen_simon}) produces the desired minimal hypersurface.
\end{proof}

Since $\cA_E = \cA_F$ for balanced manifolds, either Lemma \ref{lem:noncompact_rigidity} or one of Lemma \ref{lem:noncompact_min} or \ref{lem:noncompact_minmax} applies, so Theorem \ref{thm:balanced} follows.

\bibliographystyle{amsalpha}
\bibliography{localization.bib}

\end{document}